\newtheorem{theorem}{Theorem}[section]
\newtheorem{lemma}[theorem]{Lemma}
\theoremstyle{definition}
\newtheorem{definition}[theorem]{Definition}
\theoremstyle{remark}
\newtheorem{remark}[theorem]{Remark}
\numberwithin{equation}{section}
\newcommand{\Addresses}{{
  \bigskip
  \footnotesize

  Xavier ~Ramos Oliv\'e, \textsc{Department of Mathematics, University of California, Riverside, Riverside, CA 92521, USA} 
  
  \par\nopagebreak
  \textit{E-mail address}: \texttt{olive@math.ucr.edu}

}}
\newcommand{\tphi}{\widetilde{\varphi}}
\newcommand{\wtg}{\widetilde{g}}
\newcommand{\heat}{\left( \Delta - \partial_t \right)}
\newcommand{\hop}{\mathcal{L}}
\begin{document}

\title[Neumann Li-Yau gradient estimate]{Neumann Li-Yau gradient estimate under integral Ricci curvature bounds}

\author{Xavier Ramos Oliv\'e}





\keywords{Geometric analysis, Differential geometry, Neumann heat kernel, Integral Ricci curvature, Li-Yau gradient estimate}

\begin{abstract}
We prove a Li-Yau gradient estimate for positive solutions to the heat equation, with Neumann boundary conditions, on a compact Riemannian submanifold with boundary ${\bf M}^n\subseteq {\bf N}^n$, satisfying the integral Ricci curvature assumption:
\begin{equation}
D^2 \sup_{x\in {\bf N}} \left( \oint_{B(x,D)} |Ric^-|^p dy \right)^{\frac{1}{p}} < K 
\end{equation}
for $K(n,p)$ small enough, $p>n/2$, and $diam({\bf M})\leq D$. The boundary of ${\bf M}$ is not necessarily convex, but it needs to satisfy the interior rolling $R-$ball condition. 
\end{abstract}

\maketitle

\section{Introduction}

Let $({\bf M}^n,g)$ be a Riemannian manifold with boundary $\partial {\bf M}$. In \cite{LiYau}, P. Li and S.T. Yau proved a series of Li-Yau gradient estimates for positive solutions to the heat equation on $\bf M$. In particular, they proved that if $\bf M$ is a compact manifold with $Ric \geq -K$, for some $K\geq 0$, and the boundary of $\bf M$ is convex (i.e. its second fundamental form is nonnegative $II\geq 0$), then any positive solution $u(x,t)$ to the Neumann problem:

\begin{align} \label{NeumannProblem}
 \begin{cases}
  \partial_t u -\Delta u =0 & \text{ in } \overset{\circ}{{\bf M}} \times (0,\infty)\\
  \partial_\nu u =0 & \text{ on }\partial {\bf M} \times (0,\infty) \\
 \end{cases}
\end{align}

where $\overset{\circ}{{\bf M}} = {\bf M}\setminus \partial {\bf M}$ and $\nu$ denotes the outer unit normal vector to $\partial {\bf M}$, satisfies:

\begin{equation} \label{LiYau}
\frac{|\nabla u|^2}{u^2} - \alpha \frac{\partial_t u}{u}\leq C_1 +C_2\frac{1}{t} 
\end{equation}

for all $\alpha >1$, where:

\begin{align*}
 C_1 &= \frac{n}{\sqrt{2}}\alpha^2(\alpha -1)^{-1}K\\
 C_2 &= \frac{n}{2}\alpha^2
\end{align*}

Later, in \cite{Wang}, J. Wang generalized this result to a case where $\partial {\bf M}$ is not necessarily convex; more precisely, he considers the case where $II\geq -H$ for some $H\geq 0$, adding the necessary ``interior rolling $R-$ball'' condition, inspired by the work of R. Chen in \cite{Chen}.

\begin{definition}
 A Riemannian manifold with boundary ${\bf M}$ is said to satisfy the \emph{interior rolling $R-$ball condition} if for any $p\in \partial {\bf M}$ there exists $q\in {\bf M}$ such that $B(q,R)\subseteq {\bf M}$, and $B(q,R)\cap \partial {\bf M} = \{ p\}$, where $B(q,R)$ denotes the geodesic ball centered at $q$ with radius $R$.
\end{definition}

Under these assumptions, J. Wang proved that $u(x,t)$ satisfies the Li-Yau gradient estimate (\ref{LiYau}), with constants:

\begin{align*}
 C_1 &= \frac{6n\alpha(\alpha -1)(1+H)^7K}{(\alpha -(1+H)^2)^2}+\frac{309n^2\alpha^3(\alpha -1)(1+H)^{10}H}{(\alpha - (1+H)^2)^4R^2\beta}\\
 C_2 &= \frac{n\alpha^2(\alpha-1)^2(1+H)^4}{(2-\beta)(1-\beta)(\alpha - (1+H)^2)^2}
\end{align*}

for any $\alpha >(1+H)^2$ and any $0<\beta<\frac{1}{2}$.\\

 Notice that in both of the examples above, the authors were assuming a pointwise lower bound on the Ricci curvature. This pointwise assumption has been recently weakened by Q.S. Zhang and M. Zhu, in \cite{ZhangZhu1} and \cite{ZhangZhu2}. One of their results is a Li-Yau type gradient estimate under the integral Ricci curvature bounds introduced by P. Petersen and G. Wei in \cite{PetersenWei1} and \cite{PetersenWei2}.
\vskip 1em

In particular, in \cite{ZhangZhu2} they prove the Li-Yau type gradient estimate:

\begin{equation}
\alpha \underline{J} \frac{|\nabla u|^2}{u^2} - \frac{\partial_t u}{u} \leq \frac{C_1}{\underline{J}} \left[1+\frac{C_2}{\underline{J}}\right]+ \frac{C_3}{\underline{J}} \frac{1}{t}  
\end{equation}

for the heat kernel on a manifold ${\bf N}$, where $C_1$, $C_2$ and $C_3$ are constants depending on $n$, $p$ and $\alpha$, and $\underline{J} = \underline{J}(t)$ is a decreasing exponential function (see \cite{ZhangZhu2} for more details). Their curvature assumption is that, for $\kappa$ small enough, $p>n/2$:

\begin{equation}
\sup_{x\in N} \left( \oint_{B(x,1)} |Ric^{-}|^p dV \right)^{1/p} \leq \kappa 
\end{equation}

where $|Ric^-|$ is the negative part of the Ricci curvature, i.e. $|Ric^-| =  \max \{0, -\rho(x)\}$ where $\rho(x)$ denotes the lowest eigenvalue of the Ricci curvature. Here  $\oint$ denotes the average integral over the domain. The smallness of the integral Ricci curvature is a necessary condition in general, as shown by X. Dai, G. Wei, and Z. Zhang in \cite{DWZ}. Note that this result is for manifolds without boundary. 

%
%
%
%
\vskip 1em

To deal with a manifold with boundary with integral Ricci curvature bounds, we use the technique developed in \cite{Wang} together with the technique developed in \cite{ZhangZhu1} and \cite{ZhangZhu2}. We consider a manifold ${\bf N}^n$ satisfying the scaling invariant curvature condition:
\begin{equation}
D^2 \sup_{x\in {\bf N}} \left( \oint_{B(x,D)} |Ric^-|^p dy \right)^{\frac{1}{p}} < K 
\end{equation}
with $p>n/2$ and $K(n,p)>0$ small enough to have the volume doubling property proved in \cite{PetersenWei2}. Let ${\bf M}^n\subseteq {\bf N}^n$ be a compact Riemannian submanifold with boundary, $diam({\bf M})\leq D$, whose boundary is not necessarily convex, but that satisfies the interior rolling $R-$ball condition. Then we derive a Li-Yau gradient estimate for the positive solutions $u(x,t)>0$ to the problem (\ref{NeumannProblem}).

\vskip 1em
More precisely, our main theorem is:

\begin{theorem}
Given $H>0$, $n>0$, $p>\frac{n}{2}$, and $R>0$ small enough, there exists $K(n,p)>0$ such that if ${\bf M}^n$ is a compact Riemannian submanifold with boundary of a Riemannian manifold ${\bf N}^n$ with the properties:

\begin{enumerate}
 \item $D^2 \sup_{x\in {\bf N}} \left( \oint_{B(x,D)} |Ric^-|^p dy \right)^{\frac{1}{p}} < K $, where $diam({\bf M})\leq D$
\vskip 1em
 \item $II \geq - H$, where $II$ is the second fundamental form of $\partial{\bf M}$
\vskip 1em
 \item ${\bf M}$ satisfies the interior rolling $R-$ball condition
\end{enumerate}
\vskip 1em
then any positive solution $u(x,t)$ to: 
 \begin{equation}
  \begin{cases}
   \partial_t u-\Delta u = 0 & \text{in }\overset{\circ}{{\bf M}} \times (0,\infty)\\
   \partial_{\nu} u = 0 & \text{on }\partial {\bf M}\times (0,\infty)
  \end{cases}
 \end{equation}
 satisfies the Li-Yau type gradient estimate:
 \begin{equation}
 \alpha \underline{J} \frac{|\nabla u|^2}{u^2} - \frac{\partial_t u}{u} \leq C_1 + \frac{C_2}{\underline{J}}\frac{1}{t}
 \end{equation}
where, given any $0<\xi<1$, we can choose any $0<\alpha \leq \frac{1-\xi}{(1+H)^2}$ and any $0<\beta \leq \frac{\xi^2(1-\xi)}{2\xi^2 + n^2(1+H)^2}$, and where:

\begin{equation}
\begin{split}
  C_1 = \frac{n^2}{\alpha \sqrt{2\xi^3(1-2\beta)}} \Bigg( \frac{32n^2\alpha H^2(1+H)^2}{\xi^3 R^2} + 2\alpha (1+H)\left[ \frac{H}{R^2} + \frac{2(n-1)H(3H+1)}{R}\right] +\\
 + (\beta +4\alpha^{-1})\left[ \frac{4\alpha H(1+H)}{R} \right]^2 \Bigg)
\end{split}
\end{equation}
\begin{equation}
 C_2 = \frac{n^2}{\alpha (1-2\beta)}
\end{equation}
\begin{equation}
 \underline{J}(t) :=  2^{-\frac{1}{c-1}}e^{-\frac{\widetilde{C_3}}{c-1}t} 
\end{equation}
where  $c = (3+\frac{1}{\alpha})\frac{1}{\beta}$ and 
\begin{equation}
 \widetilde{C_3} = C_3(\alpha , \beta, n, p)\left[ \frac{K}{D^{2-\frac{n}{p}}R^{\frac{n}{p}}} + \frac{K^{\frac{2p}{2p-n}}}{D^{\frac{4p-6n}{2p-n}}R^{\frac{4n}{2p-n}}}\right]>0
\end{equation}

\end{theorem}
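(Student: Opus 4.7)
The plan is to combine Wang's boundary modifier construction from \cite{Wang}, which deals with the non-convex boundary via the interior rolling $R$-ball hypothesis, and the Moser-type iteration of Zhang-Zhu from \cite{ZhangZhu2}, which replaces pointwise Ricci lower bounds by integral ones. Because $|Ric^-|$ is only in $L^p$, the pointwise maximum principle is unavailable, so the Zhang-Zhu hard-analysis scheme must be carried out in the presence of a boundary, with the integration-by-parts boundary contributions controlled by a suitably chosen modifier.

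First, I would construct a smooth modifier $\tphi$ on ${\bf M}$, depending only on the distance $\rho$ to $\partial {\bf M}$, such that $1 \leq \tphi \leq 1+\xi$, with $\tphi \equiv 1$ outside the $R$-tube around $\partial{\bf M}$, and $\partial_\nu \tphi + H\tphi \leq 0$ on $\partial{\bf M}$ (with $\nu$ the outer unit normal). The interior rolling $R$-ball condition guarantees that $\rho$ is smooth on this tube with $|\nabla \rho|=1$ and $|\Delta \rho|\leq (n-1)/R$, so $|\nabla \tphi|$ and $|\Delta\tphi|$ are controlled by explicit multiples of $H/R$ and $H/R^2$. This is precisely the choice that, through the Bochner/Reilly computation applied to
\begin{equation*}
F = t\left( \alpha \tphi \frac{|\nabla u|^2}{u^2} - \frac{\partial_t u}{u} \right),
\end{equation*}
produces a pointwise inequality of the shape
\begin{equation*}
\heat F \geq \frac{2}{n}\frac{F^2}{t} - \frac{F}{t} - 2\alpha t\, \tphi |Ric^-|\, \frac{|\nabla u|^2}{u^2} - E,
\end{equation*}
where $E = E(\tphi,\nabla\tphi,\Delta\tphi)$ is an error that is quadratic in $H/R$ (and will be absorbed into $C_1$), and where the Neumann condition $\partial_\nu u = 0$ together with the boundary behaviour of $\tphi$ yields $\partial_\nu F \leq 0$ along $\partial {\bf M}$.

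Next, in place of the maximum principle I would multiply the previous inequality by $F_+^{c-1}$ with $c = (3+1/\alpha)/\beta$, integrate over ${\bf M}$, and integrate by parts, discarding the non-positive boundary contribution thanks to the previous step. The $|Ric^-|$-term is handled by H\"older and the Sobolev inequality valid under the smallness assumption on $K$ (via Petersen-Wei \cite{PetersenWei2}); combined with the scale-invariant curvature bound, this produces a factor of the form $\widetilde{C_3}$ with exactly the $K,D,R$ dependence quoted in the theorem. The remaining error terms coming from $E$ are absorbed into $C_1$ and $C_2$ by Young's inequality, with the $\beta$-parameter regulating the Young splitting. Setting $\Phi(t) = \bigl(\int_{\bf M}(F - C_1 t)_+^c\, dV\bigr)^{1/c}$ one obtains a differential inequality $\Phi'(t) \leq \widetilde{C_3}\Phi(t)$, and the standard Zhang-Zhu device that promotes $L^c$-control to $L^\infty$-control then yields the factor $\underline{J}(t) = 2^{-1/(c-1)} e^{-\widetilde{C_3}t/(c-1)}$ multiplying the gradient term in the conclusion.

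The main obstacle is the boundary term during iteration. Wang's pointwise argument only needs the correct sign of $\partial_\nu F$ at a single extremum, whereas the integral scheme must have $\partial_\nu F \leq 0$ \emph{pointwise everywhere} along $\partial{\bf M}$ in order to discard the integrated boundary contribution once it has been multiplied by $F_+^{c-1}$. Calibrating $\tphi$ so that this sign is preserved for every admissible $\alpha \leq (1-\xi)/(1+H)^2$, while simultaneously keeping $|\nabla \tphi|^2$ and $|\Delta \tphi|$ small enough to fit inside the explicit constants $C_1$ and $C_2$, is where the $\xi$-margin in the admissible range of $\alpha$ and $\beta$ arises, and this calibration is the heart of the technical work.
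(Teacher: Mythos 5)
Your proposal diverges from the paper's actual strategy in a fundamental way, and the divergence introduces a gap that your sketch does not close. The premise that ``because $|Ric^-|$ is only in $L^p$, the pointwise maximum principle is unavailable'' is precisely what the paper is designed to refute. The paper keeps the maximum principle by introducing an auxiliary function $J(x,t)$ solving
\begin{equation*}
\Delta J - \partial_t J - c\,\frac{|\nabla J|^2}{J} - 2J|Ric^-| = 0
\end{equation*}
with Neumann boundary conditions and $J(\cdot,0)\equiv 1$ (Lemma \ref{lemma2}), and then applies the maximum principle to $G = t\bigl[\tphi J\,(|\nabla f|^2+\epsilon) - \partial_t f\bigr]$. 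When the Bochner computation is carried out at an interior maximum of $G$, the terms $\Delta J - \partial_t J - c|\nabla J|^2/J - 2J|Ric^-|$ appear grouped as a coefficient of $\tphi g$ and vanish identically because $J$ solves the equation; the $L^p$ nature of $|Ric^-|$ therefore never enters the pointwise gradient estimate at all. The integral curvature hypothesis is used only to obtain the a priori lower bound $J\geq \underline{J}(t)$, via the Duhamel representation of $w = J^{-(c-1)}$, the Choulli--Kayser--Ouhabaz Gaussian bound for the Neumann heat kernel, and a Gr\"onwall argument. That lower bound in turn needs the volume doubling inequality $V_{\bf M}(x,s)\leq\widetilde{C}(s/r)^\gamma V_{\bf M}(x,r)$ to hold \emph{up to the boundary}; proving this from the interior rolling $R$-ball condition is Lemma \ref{VDonM}, a step entirely absent from your sketch.

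The Moser-type iteration you propose in its place does not obviously close. The curvature term in your pointwise inequality is $|Ric^-|\cdot|\nabla u|^2/u^2$, which is not controlled by $F$ alone: from $F = t\bigl(\alpha\tphi|\nabla u|^2/u^2 - \partial_t u/u\bigr)$ you only get $|\nabla u|^2/u^2 = (F/t + \partial_t u/u)/(\alpha\tphi)$, and $\partial_t u/u$ has no a priori sign or bound. Multiplying by $F_+^{c-1}$, integrating, and applying H\"older and Sobolev therefore does not produce a closed differential inequality for $\Phi(t) = \bigl(\int_{\bf M}(F - C_1 t)_+^c\bigr)^{1/c}$, and there is no ``standard Zhang--Zhu device'' that promotes $L^c$ control of this quantity to an $L^\infty$ bound; in Zhang--Zhu the exponential factor $\underline{J}(t) = 2^{-1/(c-1)}e^{-\widetilde{C_3}t/(c-1)}$ arises as a lower bound for the auxiliary solution $J$, not from norm interpolation. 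Your boundary observation is also slightly off: the paper does not need $\partial_\nu G\leq 0$ pointwise; it shows that assuming $\partial_\nu G\geq 0$ at a boundary maximum yields a strict contradiction because $\partial_\nu\tphi(p) = -2\alpha H/R$ dominates the second-fundamental-form contribution for $R<1$, which rules out boundary maxima. If you want an integral scheme to work, you would need to construct something replacing $J$ and make the curvature term integrable against your test function, which is exactly where the paper's construction is doing the real work.
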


\begin{remark}
 Notice that the range of values of $\alpha$ for which this theorem holds is consistent with the range of values of $\alpha$ in \cite{Wang}.
\end{remark}

One of the key tools that we used are the Gaussian upper bounds of the Neumann heat kernel proved by M. Choulli, L. Kayser, and E.M. Ouhabaz in  \cite{CKO}. To apply their result, we use the volume doubling results of \cite{PetersenWei2}, and we show in Lemma \ref{VDonM} that the interior rolling $R-$ball condition ensures that the volume doubling property in $\bf M$ holds up to the boundary. These two results, together with the Sobolev inequality of \cite{DWZ}, allow us to use the Gaussian upper bounds to find $\underline{J}(t)$, as in \cite{ZhangZhu2}.


\section{Proof of main theorem}

As in the proof of \cite{Wang} and \cite{Chen}, consider a nonnegative $C^2$ function $\psi(r)$ defined on $[0,\infty)$ such that

$$\begin{cases}
 \psi(r) \leq H & \text{if }r\in [0,1/2)\\
 \psi(r) = H & \text{if }r\in [1,\infty)
\end{cases}
$$

with $\psi(0) = 0$, $0\leq \psi'(r) \leq 2H$, $\psi'(0) = H$ and $\psi''(r) \geq -H$. Define $\phi(x) := \psi \left( \frac{r(x)}{R} \right)$, where $r(x)$ denotes the distance from $x\in {\bf M}$ to $\partial {\bf M}$. Let $\varphi(x) := (1+\phi(x))^2$, and $\tphi(x):= \alpha \varphi(x)$, where $\alpha>0$ will be determined below.

\begin{lemma}\label{lemma1}
 The function $\tphi$ satisfies:
 \begin{align}
   &\alpha \leq \tphi \leq \alpha (1+H)^2 \label{lemma1.1}\\ 
   &|\nabla \tphi| \leq \frac{4\alpha H(1+H)}{R} \label{lemma1.2}\\
   &\Delta \tphi \geq -2\alpha (1+H)\left[ \frac{H}{R^2}+\frac{2(n-1)H(3H+1)}{R}\right] \label{lemma1.3}
 \end{align}

\end{lemma}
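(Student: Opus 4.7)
The plan is a direct chain-rule computation carried out on the tubular neighborhood $T_R := \{x\in {\bf M} : r(x) \leq R\}$, where $r(x) = d(x,\partial{\bf M})$ is $C^2$ and satisfies $|\nabla r|=1$ thanks to the interior rolling $R$-ball condition. Outside $T_R$ one has $\psi(r/R) \equiv H$, so $\phi$ is locally constant and all three estimates are trivial; everything reduces to the analysis inside $T_R$.

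For (\ref{lemma1.1}) the bound $0 \leq \psi \leq H$ yields $0 \leq \phi \leq H$, hence $\alpha \leq \tphi = \alpha(1+\phi)^2 \leq \alpha(1+H)^2$. For (\ref{lemma1.2}) the chain rule gives $\nabla\tphi = 2\alpha(1+\phi) R^{-1}\psi'(r/R)\nabla r$; combining $|\nabla r|=1$, $0 \leq \psi' \leq 2H$, and $1+\phi \leq 1+H$ produces $|\nabla\tphi| \leq 4\alpha H(1+H)/R$.

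For (\ref{lemma1.3}) I would expand
\begin{equation*}
\Delta\tphi = 2\alpha(1+\phi)\Delta\phi + 2\alpha|\nabla\phi|^2 \geq 2\alpha(1+\phi)\Delta\phi,
\end{equation*}
and compute $\Delta\phi = R^{-2}\psi''(r/R) + R^{-1}\psi'(r/R)\Delta r$. The lower bound $\psi'' \geq -H$ immediately accounts for the $-H/R^2$ summand in the claim, so the only remaining task is a pointwise lower bound for $\Delta r$ on $T_R$.

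The main obstacle is this lower bound on $\Delta r$, which I would handle through the Riccati equation method of Chen \cite{Chen} and Wang \cite{Wang}. Along a normal geodesic from $p\in\partial{\bf M}$ to $x\in T_R$, the principal curvatures $\lambda_i(t)$ of the level set $\{r=t\}$ satisfy a Riccati evolution with initial datum $\lambda_i(0)\geq -H$, while the rolling $R$-ball caps them from above by comparison with the tangent sphere $\partial B(q,R)$. Taking $R$ small enough to keep the Riccati solutions away from blow-up yields a bound of the form $\Delta r = \sum_i \lambda_i \geq -(n-1)(3H+1)$ on $T_R$, depending only on $n$ and $H$. Then $R^{-1}\psi'(r/R)\Delta r \geq -(2H/R)(n-1)(3H+1)$, and multiplying by $2\alpha(1+\phi) \leq 2\alpha(1+H)$ delivers the $1/R$ summand in (\ref{lemma1.3}). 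The subtlety in the integral-Ricci setting is that pointwise ambient curvature is unavailable in the Riccati equation, so the argument has to exploit smallness of $R$ and the purely boundary-geometric datum $H$ in order to obtain a constant independent of the sectional curvature.
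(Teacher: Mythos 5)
Your proof plan follows the same path as the paper: (\ref{lemma1.1}) and (\ref{lemma1.2}) are immediate chain-rule estimates, and (\ref{lemma1.3}) reduces to the pointwise bound (\ref{keyineq}), i.e.\ $\Delta r \geq -(n-1)(3H+1)$ on the collar $\{r<R\}$, which the paper imports from \cite{Chen} without re-deriving. Your observation that everything is trivial outside the collar (where $\psi\equiv H$, hence $\tphi$ is locally constant) and your Riccati framing of Chen's estimate are both accurate, and the drop of the nonnegative term $2\alpha|\nabla\phi|^2$ matches what the paper does.

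One correction to your closing sentence, though: the paper does \emph{not} make the admissible range of $R$ independent of pointwise ambient curvature, nor does it need to. The Remark immediately following Lemma \ref{lemma1} states that (\ref{keyineq}) holds provided $R<1$ is small enough that $\sqrt{K_R}\tan(R\sqrt{K_R}) \leq \frac{1+H}{2}$ and $\frac{H}{\sqrt{K_R}}\tan(R\sqrt{K_R}) \leq \frac{1}{2}$, where $K_R$ is the supremum of the sectional curvature within distance $R$ of $\partial{\bf M}$. This $K_R$ is automatically finite because ${\bf M}$ is compact and smooth, so a valid $R$ always exists, but how small $R$ must be genuinely depends on pointwise sectional curvature near the boundary; it is not controlled by the integral Ricci hypothesis. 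There is therefore no ``subtlety in the integral-Ricci setting'' to resolve in this lemma: pointwise curvature near $\partial{\bf M}$ is freely available and used. The integral Ricci assumption plays no role in Lemma \ref{lemma1}; it enters only later through the auxiliary function $J$ of Lemma \ref{lemma2} and the heat-kernel and volume-doubling estimates used to bound it.
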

\begin{proof}[Proof of Lemma 2.1]
The inequalities (\ref{lemma1.1}) and (\ref{lemma1.2}) are immediate from the definitions. To prove (\ref{lemma1.3}) we follow the same argument as in \cite{Wang} and \cite{Chen}: we need to use that 

\begin{equation}\label{keyineq}
\Delta r \geq -(n-1)(3H+1) 
\end{equation}

For a detailed argument on how to derive (\ref{keyineq}) see \cite{Chen}. Then:

\begin{equation}
 \Delta \phi = \frac{\psi '' |\nabla r|^2}{R^2} + \frac{\psi ' \Delta r}{R}\geq -\frac{H}{R^2} - \frac{2H(n-1)(3H+1)}{R}
\end{equation}

So:

\begin{equation}
 \Delta \tphi = 2\alpha |\nabla \phi|^2 + 2\alpha (1+\phi)\Delta \phi \geq -2\alpha(1+H)\left[ \frac{H}{R^2} + \frac{2H(n-1)(3H+1)}{R} \right]
\end{equation}
 
\end{proof}

\begin{remark}
 The inequality (\ref{keyineq}) holds as long as $R<1$ is chosen small enough so that:
 
 \begin{equation}
  \sqrt{K_R} \tan (R\sqrt{K_R}) \leq \frac{1+H}{2}
  \end{equation}
and
  \begin{equation}
  \frac{H}{\sqrt{K_R}} \tan (R\sqrt{K_R}) \leq \frac{1}{2}
  \end{equation}
where $K_R$ is the supremum of the sectional curvature at distance $R$ from the boundary.
\end{remark}


Before we start the proof of the main theorem, we will need the following technical lemma which will be proved in the following section.

\begin{lemma}\label{lemma2}
 There exists a unique smooth solution $J(x,t)$ to the problem:
 
\begin{equation}
\begin{cases}
\Delta J -\partial_t J -c \frac{|\nabla J|^2}{J} - 2J|Ric^-|=0 & \text{ in }\overset{\circ}{{\bf M}}\times(0,\infty)\\
\partial_\nu J = 0 & \text{ on } \partial{\bf M}\times (0,\infty)\\
J = 1 & \text{ on } {\bf M}\times \{0\}
\end{cases} 
\end{equation}

for $c>1$ constant, and it satisfies 

\begin{equation} \label{Jbounds}
0< \underline{J} \leq J \leq 1 
\end{equation}

where $\underline{J} = \underline{J} (t)$ is given by 

\begin{equation}
 \underline{J}(t) :=  2^{-\frac{1}{c-1}}e^{-\frac{\widetilde{C_3}}{c-1}t}  
\end{equation}

and $\widetilde{C_3} = C_3(c, n,p)\left[ \frac{K}{D^{2-\frac{n}{p}}R^{\frac{n}{p}}} + \frac{K^{\frac{2p}{2p-n}}}{D^{\frac{4p-6n}{2p-n}}R^{\frac{4n}{2p-n}}}\right]>0$.
 
 \begin{remark}
  The function $\underline{J} (t)$ is decreasing in $t$.
 \end{remark}
\end{lemma}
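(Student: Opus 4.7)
The plan is to linearize the equation via the power substitution $J = h^{-1/(c-1)}$. Computing $\partial_t J$, $\nabla J$, and $\Delta J$ in terms of $h$ and substituting into the equation, the exponent $-1/(c-1)$ is chosen precisely so that the nonlinear $c|\nabla J|^2/J$ term is canceled exactly by the analogous term produced by the chain rule (this is where $c>1$ is needed). The positivity of $h$ makes the Neumann boundary and initial conditions transform cleanly. One obtains the equivalent \emph{linear} parabolic problem
\begin{equation}
\begin{cases}
\partial_t h - \Delta h - V h = 0 & \text{in }\overset{\circ}{{\bf M}}\times(0,\infty),\\
\partial_\nu h = 0 & \text{on }\partial{\bf M}\times(0,\infty),\\
h|_{t=0} = 1,
\end{cases}
\end{equation}
with $V := 2(c-1)|Ric^-|\geq 0$, which lies in $L^p({\bf M})$ for $p>n/2$ by hypothesis.

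Existence, uniqueness, and smoothness (after a standard mollification of $V$) of a positive $h$ then follow from standard linear parabolic theory on a smooth compact domain with compatible Neumann data and subcritical $L^p$ potential. Since $V\geq 0$ and the constant function $1$ is a subsolution with matching initial value and boundary condition, the parabolic weak maximum principle forces $h\geq 1$, equivalently $J\leq 1$.

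For the bound $J\geq \underline{J}$ it suffices to prove the upper bound $h(x,t)\leq \underline{J}(t)^{-(c-1)} = 2\,e^{\widetilde{C_3}\,t}$. I would use the Feynman--Kac representation
\begin{equation}
h(x,t) = E_x\!\left[\exp\!\left(\int_0^t V(X_s)\,ds\right)\right],
\end{equation}
where $X_s$ is reflected Brownian motion on ${\bf M}$, combined with a Khasminskii-type iteration: locate $\tau>0$ with
\begin{equation}
\eta(\tau) := \sup_{x\in {\bf M}}\int_0^\tau\!\!\int_{\bf M} G_N(x,y,s)\,V(y)\,dy\,ds \leq \tfrac{1}{2},
\end{equation}
where $G_N$ is the Neumann heat kernel. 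Khasminskii's lemma then gives $\|h(\cdot,\tau)\|_\infty\leq 2$, and the Markov property upgrades this to $\|h(\cdot,k\tau)\|_\infty\leq 2^k$, producing the desired bound with rate proportional to $(\log 2)/\tau$.

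The main obstacle is estimating $\eta(\tau)$ quantitatively in terms of the integral Ricci bound so as to produce the exact form of $\widetilde{C_3}$ in the statement. For this I would invoke the Neumann Gaussian upper bound of \cite{CKO} for $G_N$; its hypotheses (volume doubling up to $\partial{\bf M}$ and a Sobolev inequality on ${\bf M}$) are supplied by Lemma \ref{VDonM} together with the Sobolev inequality of \cite{DWZ} under the integral Ricci assumption. A standard dyadic decomposition of ${\bf M}$ into geodesic annuli around $x$, combined with the Gaussian weight on each annulus, H\"older in the spatial integral, and the scale-invariant integral Ricci hypothesis applied at each dyadic scale, yields a bound for $\eta(\tau)$ linear in $\tau$ with coefficient precisely of the form $C_3(c,n,p)\left[\tfrac{K}{D^{2-n/p}R^{n/p}} + \tfrac{K^{2p/(2p-n)}}{D^{(4p-6n)/(2p-n)}R^{4n/(2p-n)}}\right]$; choosing $\tau$ so that $\eta(\tau)=1/2$ then fixes $\widetilde{C_3}$ and closes the proof.
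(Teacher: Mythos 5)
Your proposal uses the same linearizing substitution $J = w^{-1/(c-1)}$ and the same key analytic inputs (the Choulli--Kayser--Ouhabaz Gaussian upper bound for the Neumann heat kernel, the Petersen--Wei volume doubling extended to $\bf M$ via Lemma~\ref{VDonM}, and the Dai--Wei--Zhang Sobolev inequality), but replaces the paper's route from those inputs to the bound $w\leq 2e^{\widetilde{C_3}t}$ by a genuinely different one. The paper works directly with the Duhamel representation $w(x,t)=1+\int_0^t\int_{\bf M}h(t-s,x,y)V(y)w(y,s)\,dy\,ds$, bounds $\int_{\bf M}h(t-s,x,y)V(y)\,dy$ by splitting at $t-s=D^2$ (using H\"older and the volume lower bound in the short-time regime), and closes with a Gr\"onwall inequality for $\overline{w}$. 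You instead invoke Feynman--Kac for reflected Brownian motion and Khasminskii's lemma, reaching $\|w(\cdot,\tau)\|_\infty\leq 2$ from a smallness condition on $\eta(\tau)=\sup_x\int_0^\tau\int_{\bf M}G_N(x,y,s)V(y)\,dy\,ds$, then iterate via the Markov property and interpolate. These are essentially the same computation in two different languages; the Khasminskii route is perhaps more transparent about where the exponent $\frac{2p}{2p-n}$ in $\widetilde{C_3}$ comes from, while the Duhamel--Gr\"onwall route avoids any probabilistic machinery and requires only the maximum principle. Two small corrections to your write-up: (i) $V=2(c-1)|Ric^-|$ is a bounded continuous function on the compact manifold $\bf M$, so no mollification is needed; and (ii) your assertion that $\eta(\tau)$ is \emph{linear} in $\tau$ is not quite right: since $V$ is only controlled in $L^p$ with $p>n/2$, the short-time contribution to $\eta(\tau)$ scales like $\tau^{(2p-n)/(2p)}$, not $\tau$. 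This does not break the argument --- choosing $\tau$ so that each of the two contributions is at most $1/4$ still yields $1/\tau\lesssim \widetilde{C_4}+\widetilde{C_5}^{2p/(2p-n)}$, which produces exactly the two terms in the stated $\widetilde{C_3}$ --- but the statement as written is inaccurate and would need to be fixed before this could substitute for the paper's proof.
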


The proof of this lemma is provided in section \ref{prooflemma2}. With it, we can start the proof of our main theorem.

\begin{proof}[Proof of Theorem 1.2]
Consider the function:

\begin{equation}
G(x,t):= t\left[\tphi J (|\nabla f|^2 +\epsilon)-\partial_t f \right] 
\end{equation}

where $f=\ln u$, $\epsilon>0$, and $J(x,t)$ is the function from Lemma \ref{lemma2} corresponding to $c=\left( 3+\frac{1}{\alpha}\right)\frac{1}{\beta}$, where $\alpha, \beta>0$ are constants. 
\vskip 1em
Let $(p,t_0)$ be the maximum of $G$ in ${\bf M}\times [0,T]$, for $T>0$. Notice that we can assume w.l.o.g. that $t_0>0$, since otherwise $t_0=0$ implies that $G\leq 0$ in ${\bf M}\times [0,T]$, which is stronger than what we want to prove.

\vskip 1em
\emph{Case 1: $p\in \partial {\bf M}$}
\vskip 1em

In that case, $\partial_\nu G(p,t_0) \geq 0$, and choosing an orthonormal frame at $p$ so that $e_n=\nu$, we get:

\begin{equation}
 0 \leq t_0 \left[ \partial_\nu \tphi J \left(|\nabla f|^2+\epsilon \right)+ \tphi \left( \partial_\nu J \left( |\nabla f|^2 + \epsilon \right)+2J\sum_{i=1}^{n} \partial_i f \partial_\nu \partial_i  f \right) -\partial_\nu \partial_t f \right]
\end{equation}

Using that $\partial_\nu f = 0$ and $\partial_\nu J = 0$ on $\partial {\bf M}\times (0,\infty)$, and dividing by $t_0 \tphi J(|\nabla f|^2+\epsilon)$, we get that at $(p,t_0)$:

\begin{equation}
 0\leq \frac{1}{\tphi} \partial_\nu \tphi + 2\frac{\sum_{i=1}^{n-1} \partial_i f \partial_\nu \partial_i f}{|\nabla f|^2+\epsilon }
\end{equation}

But now, following the argument of \cite{Wang}, this leads to a contradiction. By direct computation:

\begin{equation}
 \sum_{i=1}^{n-1} \partial_i f \partial_\nu \partial_i f = -II(\nabla f, \nabla f) \leq H |\nabla f|^2
\end{equation}

Also, since $p\in \partial {\bf M}$ we have that $r(p)=0$, and so $\phi(p) = \psi(0)= 0$; so using that $\psi'(0) = H$ and that $\nabla r \cdot \nu (p) = -1$ we get:

\begin{equation}
 \partial_\nu \tphi (p) = \nabla \tphi \cdot \nu (p) = 2\alpha (1+\phi(p))\psi'\left(\frac{r(p)}{R}\right) \frac{1}{R}\nabla r\cdot \nu = -\alpha \frac{2H}{R}
\end{equation}

Now, from the two expressions above, assuming w.l.o.g. that $R<1$ and using that $\tphi (p) = \alpha$, for any $\epsilon>0$ we get:

\begin{equation}
\frac{1}{\tphi}\partial_\nu\tphi + 2\frac{\sum_{i=1}^{n-1} \partial_i f \partial_\nu \partial_i f}{|\nabla f|^2+\epsilon }\leq -\frac{2H}{R} + 2\frac{H |\nabla f|^2}{|\nabla f|^2+\epsilon } <0
\end{equation}

which is a contradiction. Thus, \emph{Case 1} can not occur.

\vskip 1em
\emph{Case 2: $p\in \overset{\circ}{{\bf M}} = {\bf M}\setminus \partial {\bf M}$}
\vskip 1em

In this case, $(p,t_0)$ is a local maximum, thus $\nabla G(p,t_0) = 0$, $\partial_t G(p,t_0) \geq 0$, and $\Delta G(p,t_0) \leq 0$, which implies that $\Delta G - \partial_t G \leq 0$ at $(p,t_0)$. We can assume w.l.o.g. that $G(p,t_0) >0$, since otherwise we get a stronger statement than what we are going to prove.
\vskip 1em
Now we proceed similarly as in \cite{ZhangZhu2}. By direct computation, using Bochner's formula:

\begin{equation}
 \heat \left( \frac{|\nabla u|^2}{u} + \epsilon u \right) = \frac{2}{u} \left|\partial_i \partial_j u - \frac{\partial_i u\partial_j u}{u}\right|^2 + 2R_{ij}\frac{\partial_i u\partial_j u}{u}
\end{equation}

Let's call $g= \frac{|\nabla u|^2}{u^2} +\epsilon$ and $\wtg = ug = \frac{|\nabla u|^2}{u} + \epsilon u$. Then:

\begin{align}\label{eq1}
 \heat \left( \tphi J\wtg - \partial_tu \right)  &= \heat(\tphi J)\cdot \wtg + 2\nabla (\tphi J)\nabla \wtg + \tphi J \left[ \heat\wtg \right] \\ &= \heat(\tphi J)\cdot \wtg  + 2\nabla(\tphi J)\nabla \wtg + \tphi J \left[ \frac{2}{u} \left|\partial_i \partial_j u - \frac{\partial_i u\partial_j u}{u}\right|^2 + 2R_{ij}\frac{\partial_i u\partial_j u}{u} \right]
\end{align}

Using the quotient formula for the operator $\hop = \Delta - \partial_t$, which is:

\begin{equation}
 \hop \left( \frac{A}{B} \right) + 2\nabla \ln B \nabla \frac{A}{B} = \frac{\hop A}{B} - \frac{A\hop B}{B^2}
\end{equation}

for $A= \tphi J \wtg -\partial_t u$ and $B=u$, and defining $Q=G/t = \tphi Jg-\frac{\partial_t u}{u}$, we get:

\begin{equation}
 \heat  Q  + \frac{2}{u}\nabla u \nabla Q = \frac{\heat (\tphi J \wtg - \partial_t u)}{u}
\end{equation}

And from (\ref{eq1}) we get:

\begin{equation}\label{eq2}
 \heat  Q  + \frac{2}{u}\nabla u \nabla Q = \heat(\tphi J)\cdot g  + 2\nabla(\tphi J)\frac{\nabla \wtg}{u} + \tphi J \left[ \frac{2}{u^2} \left|\partial_i \partial_j u - \frac{\partial_i u\partial_j u}{u}\right|^2 + 2R_{ij}\frac{\partial_i u\partial_j u}{u^2} \right]
\end{equation}

Also, using that $\nabla \left( \frac{|\nabla u|^2}{u} \right)\frac{1}{u} = \nabla \left( \frac{|\nabla u|^2}{u^2}\right) + \frac{|\nabla u|^2}{u^2}\frac{\nabla u}{u}$, and the notation $f=\ln u$, we observe that:

\begin{equation}
 \frac{\nabla \wtg}{u} = \nabla \left( \frac{|\nabla u|^2}{u^2}\right) + \frac{|\nabla u|^2}{u^2}\frac{\nabla u}{u} +\epsilon\frac{\nabla u}{u} =  \nabla \left(|\nabla f|^2\right) +g\nabla f
\end{equation}

Hence (\ref{eq2}) becomes:

\begin{equation}\label{eq3}
 \heat  Q  + 2\nabla f \nabla Q = \heat(\tphi J)\cdot g  + 2\nabla(\tphi J)\left[ \nabla \left( |\nabla f|^2\right) +g\nabla f\right] + 2\tphi J \left[ \left|\partial_i \partial_j f\right|^2 + Ric(\nabla f, \nabla f) \right]
\end{equation}

Now notice that, for $\beta>0$:

\begin{equation}
 \nabla J \nabla(|\nabla f|^2) \geq  -\frac{1}{\beta J}|\nabla J|^2|\nabla f|^2 - \beta J |\partial_i \partial_j f|^2 
\end{equation}

So (\ref{eq3}) gives us:

\begin{equation}
\begin{split}
 \heat  Q  + 2\nabla f \nabla Q \geq \heat(\tphi J)\cdot g  + 2g\nabla(\tphi J)\nabla f + 2J\nabla\tphi \nabla (|\nabla f|^2) \\
 -\frac{2}{\beta J}\tphi|\nabla J|^2|\nabla f|^2 - 2\beta \tphi J |\partial_i \partial_j f|^2  + 2\tphi J \left[ \left|\partial_i \partial_j f\right|^2 + Ric(\nabla f, \nabla f) \right] 
\end{split}
\end{equation}

Since $Ric(\nabla f, \nabla f) \geq -|Ric^-||\nabla f|^2$ and $ |\nabla f|^2\leq g$:

\begin{equation}
 \begin{split}
 \heat  Q  + 2\nabla f \nabla Q \geq \heat(\tphi J)\cdot g  + 2g\nabla(\tphi J)\nabla f + 2J\nabla\tphi \nabla (|\nabla f|^2) \\ 
 -\frac{2}{\beta J}\tphi g|\nabla J|^2 - 2\beta \tphi J |\partial_i \partial_j f|^2  + 2\tphi J \left[ \left|\partial_i \partial_j f\right|^2 -g|Ric^-|) \right]= \\
 = \heat(\tphi J)\cdot g  + 2J\nabla\tphi \nabla (|\nabla f|^2) +2\tphi J \left[ (1-\beta)|\partial_i \partial_j f|^2 -g|Ric^-| \right]\\ 
 -\frac{2}{\beta J}\tphi g|\nabla J|^2 + 2g\tphi \nabla J \nabla f + 2gJ\nabla \tphi \nabla f
 \end{split}
\end{equation}

Using that $-2|\nabla f||\nabla J| \geq - \frac{|\nabla J|^2}{\beta J} - \beta J |\nabla f|^2$ and Cauchy-Schwarz, we get:

\begin{equation}
 \begin{split}
  \heat  Q  + 2\nabla f \nabla Q  \geq \heat(\tphi J)\cdot g  + 2(1-\beta)\tphi J |\partial_i \partial_j f|^2 - \beta J g \tphi |\nabla f|^2 \\ 
  + 2J\nabla \tphi \nabla(|\nabla f|^2) + 2gJ\nabla \tphi \nabla f + \left[ -\frac{3}{\beta} \frac{|\nabla J|^2}{J} - 2J|Ric^-|\right]\tphi g
 \end{split}
\end{equation}

Now expanding the first term on the right:

\begin{equation}
 \begin{split}
  \heat  Q  + 2\nabla f \nabla Q  \geq g\left[ \Delta \tphi J + 2\nabla \tphi \nabla J\right]  + 2(1-\beta)\tphi J |\partial_i \partial_j f|^2 - \beta J g \tphi |\nabla f|^2 \\ 
  + 2J\nabla \tphi \nabla(|\nabla f|^2) + 2gJ\nabla \tphi \nabla f + \left[\Delta J -\partial_t J -\frac{3}{\beta} \frac{|\nabla J|^2}{J} - 2J|Ric^-|\right]\tphi g
 \end{split}
\end{equation}

Using (\ref{lemma1.1}) we get $2\nabla \tphi \nabla J \geq -\frac{1}{\alpha \beta}\tphi  \frac{|\nabla J|^2}{J} -\beta J |\nabla \tphi|^2$, hence:

\begin{equation}\label{eq4}
 \begin{split}
  \heat  Q  + 2\nabla f \nabla Q  \geq Jg\Delta \tphi + 2J\nabla \tphi \nabla(|\nabla f|^2) + 2gJ\nabla \tphi \nabla f - \beta |\nabla \tphi|^2gJ\\ 
  +  2(1-\beta)\tphi J |\partial_i \partial_j f|^2 - \beta J g \tphi |\nabla f|^2 + \tphi g \left[\Delta J -\partial_t J -\left(3 + \frac{1}{\alpha}\right)\frac{1}{\beta} \frac{|\nabla J|^2}{J} - 2J|Ric^-|\right]
 \end{split}
\end{equation}

Since $J$ solves the problem of Lemma \ref{lemma2} for $c = \left(3 +\frac{1}{\alpha}\right)\frac{1}{\beta}$, we see that (\ref{eq4}) becomes:

\begin{equation}
 \begin{split}
  \heat  Q  + 2\nabla f \nabla Q  \geq J \Bigg[ g\Delta \tphi + 2\nabla \tphi \nabla(|\nabla f|^2) + 2g\nabla \tphi \nabla f - \beta |\nabla \tphi|^2g\\ 
  +  2(1-\beta)\tphi  |\partial_i \partial_j f|^2 - \beta  g \tphi |\nabla f|^2 \Bigg] 
 \end{split}
\end{equation}

Note that:

\begin{equation}
 \heat G + 2\nabla f\nabla G = \heat (tQ)+2\nabla f\nabla (tQ) = t\left[ \heat Q +2\nabla f \nabla Q \right] - Q
\end{equation}

Since we know that $(p,t_0)$ is a local maximum, $\heat G(p,t_0) + 2\nabla f\nabla G (p,t_0) = \heat G(p,t_0) \leq 0$, so at $(p,t_0)$ we have:

\begin{equation}
 0\geq t_0 J\left[ g\Delta \tphi + 2\nabla \tphi \nabla(|\nabla f|^2) + 2g\nabla \tphi \nabla f - \beta |\nabla \tphi|^2g
  +  2(1-\beta)\tphi  |\partial_i \partial_j f|^2 - \beta  g \tphi |\nabla f|^2  \right] - Q
\end{equation}

Expanding the second term on the right, notice that:

\begin{equation}
 2\nabla \tphi \nabla (|\nabla f|^2) 
 \geq -4\alpha^{-1}|\nabla\tphi|^2|\nabla f|^2-\alpha |\partial_i \partial_j f|^2
\end{equation}

 Hence:

\begin{equation}
 0\geq t_0 J\left[ g\Delta \tphi + 2g\nabla \tphi \nabla f - \beta |\nabla \tphi|^2g
  +  (2(1-\beta)\tphi-\alpha)  |\partial_i \partial_j f|^2 -4\alpha^{-1}|\nabla \tphi|^2|\nabla f|^2- \beta  g \tphi |\nabla f|^2  \right] - Q
\end{equation}

Now using that

\begin{equation}
\sum_{i,j=1}^n|\partial_i\partial_j f|^2 \geq \frac{1}{n^2} \left( \sum_{i,j=1}^n |\partial_i \partial_j f| \right)^2 \geq \frac{1}{n^2} (\Delta f)^2 = \frac{1}{n^2}(|\nabla f|^2- \partial_t f)^2 
\end{equation}

we get:

\begin{equation}
 0\geq t_0 J\left[ g\Delta \tphi + 2g\nabla \tphi \nabla f - \beta |\nabla \tphi|^2g
  +  \frac{2(1-\beta)\tphi-\alpha}{n^2}  (|\nabla f|^2-\partial_t f)^2 -4\alpha^{-1}|\nabla \tphi|^2|\nabla f|^2- \beta  g \tphi |\nabla f|^2  \right] - Q
\end{equation}

In the discussion below, $O(\epsilon)$ denotes a function that goes to zero as $\epsilon$ goes to zero. Expanding the terms containing $g= |\nabla f|^2 + \epsilon$, by Lemma \ref{lemma1} and the elementary inequality $2\nabla \tphi \nabla f \geq -(|\nabla \tphi|^2 + |\nabla f|^2)$, we get:

\begin{equation}
\begin{split}
0\geq t_0 J\Bigg[ |\nabla f|^2\Delta \tphi + 2|\nabla f|^2\nabla \tphi \nabla f -\epsilon |\nabla f|^2- \beta |\nabla \tphi|^2|\nabla f|^2  \\ 
+  \frac{2(1-\beta)\tphi-\alpha}{n^2}  (|\nabla f|^2-\partial_t f)^2 -4\alpha^{-1}|\nabla \tphi|^2|\nabla f|^2- \beta  \epsilon \tphi |\nabla f|^2 -\beta \tphi |\nabla f|^4 \Bigg] - Q +O(\epsilon)
\end{split}
\end{equation}

Using Cauchy-Schwarz and rearranging terms:

\begin{equation}\label{eq5}
\begin{split}
 0\geq t_0 J \Bigg[ (\Delta \tphi -(\beta + 4\alpha^{-1}) |\nabla \tphi|^2 +O(\epsilon ))|\nabla f|^2 -2|\nabla \tphi||\nabla f|^3 - \beta \tphi |\nabla f|^4  \\+ \frac{2(1-\beta)\tphi-\alpha}{n^2}  (|\nabla f|^2-\partial_t f)^2 \Bigg] -Q +O(\epsilon)
\end{split}
\end{equation}

Now we will relate the term $(|\nabla f|^2-\partial_t f)^2$ to $Q^2$. Notice that, by direct computation:

\begin{equation}
Q^2 = (|\nabla f|^2-\partial_t f)^2 + (\tphi^2 J^2 -1)|\nabla f|^4 + 2(1-\tphi J)|\nabla f|^2 \partial_t f + O(\epsilon)|\nabla f|^2 +O(\epsilon) \partial_t f +O(\epsilon) 
\end{equation}

Using that $\partial_t f = \tphi J (|\nabla f|^2 + \epsilon) - Q$, we get:

\begin{equation}
Q^2 = (|\nabla f|^2-\partial_t f)^2 -(1-\tphi J)^2 |\nabla f|^4 - 2(1-\tphi J)Q|\nabla f|^2 + O(\epsilon)|\nabla f|^2 + O(\epsilon) Q+O(\epsilon) 
\end{equation}

Hence:

\begin{equation}
(|\nabla f|^2-\partial_t f)^2 = Q^2 +(1-\tphi J)^2 |\nabla f|^4 + 2(1-\tphi J)Q|\nabla f|^2 + O(\epsilon)|\nabla f|^2 + O(\epsilon) Q+O(\epsilon)
\end{equation}

Notice that if we choose $\alpha \leq \frac{1-\xi}{(1+H)^2}$ for some $0<\xi <1$, from (\ref{lemma1.1}) and (\ref{Jbounds}) we get $1-\tphi J\geq \xi$, so $2(1-\tphi J)Q|\nabla f|^2\geq 0$, hence:

\begin{equation}
(|\nabla f|^2-\partial_t f)^2 \geq Q^2 +(1-\tphi J)^2 |\nabla f|^4 + O(\epsilon)|\nabla f|^2 + O(\epsilon) Q+O(\epsilon)
\end{equation}

Making sure that our later choice of $\beta$ is so that $\frac{2(1-\beta)\tphi - \alpha}{n^2}>0$, (\ref{eq5}) becomes:

\begin{equation}\label{eq6}
\begin{split}
0 \geq t_0 J \Bigg[ \left( \Delta \tphi - (\beta +4\alpha^{-1})|\nabla \tphi|^2 +O(\epsilon ) \right)|\nabla f|^2 - 2|\nabla \tphi||\nabla f|^3 +\\ +\left(\left[\frac{2(1-\beta)\tphi-\alpha}{n^2} \right](1-\tphi J)^2 - \beta \tphi \right) |\nabla f|^4 + \frac{2(1-\beta)\tphi-\alpha}{n^2}Q^2 \Bigg] -\left(1+O(\epsilon)\right)Q + O(\epsilon)
\end{split}
\end{equation}

Now we are going to choose $\beta >0$ so that the coefficient of $|\nabla f|^4$ is positive; namely, for some $A >0$, we want to have:

\begin{equation}
\left[\frac{2(1-\beta)\tphi-\alpha}{n^2} \right](1-\tphi J)^2 - \beta \tphi \geq A
\end{equation}

Using Lemmas \ref{lemma1} and \ref{lemma2}, and the choice of $\alpha$, we know that:

\begin{equation}
\left[\frac{2(1-\beta)\tphi-\alpha}{n^2} \right](1-\tphi J)^2 - \beta \tphi \geq  \left[\frac{2(1-\beta)\alpha-\alpha}{n^2} \right]\xi^2 - \beta \alpha (1+H)^2
\end{equation}

Now setting the right hand side to be greater or equal than $A$, we get the condition:
 
 \begin{equation}
  \beta \leq \frac{\alpha \xi^2 - A n^2}{2\alpha \xi^2 + \alpha n^2 (1+H)^2}
 \end{equation}

 To ensure that there are positive values of $\beta$, we choose $A = \frac{\alpha \xi^3}{n^2}$, so that the condition above becomes:
 
 \begin{equation}
  \beta \leq \frac{\xi^2(1-\xi)}{2\xi^2 + n^2(1+H)^2}
 \end{equation}
 
 Choosing $\beta$ in this way and using Lemmas \ref{lemma1} and \ref{lemma2}, the coefficient of $|\nabla f|^2$ satisfies:
 
 \begin{equation}
  \Delta \tphi -(\beta + 4\alpha^{-1})|\nabla \tphi|^2 + O(\epsilon) \geq -C(\alpha , \beta , n,H,R)+O(\epsilon) =:-C_\epsilon
 \end{equation}
 
 where:
 
 \begin{equation}
  C := 2\alpha (1+H)\left[ \frac{H}{R^2} + \frac{2(n-1)H(3H+1)}{R}\right] + (\beta +4\alpha^{-1})\left[ \frac{4\alpha H(1+H)}{R} \right]^2 
 \end{equation}

 The one of $|\nabla f|^3$ satisfies:
 
 \begin{equation}
  -2|\nabla \tphi| \geq -\frac{8\alpha H(1+H)}{R} =: -B(\alpha, H, R)
 \end{equation}
 
 Finally, the one of $Q^2$ satisfies:
 
 \begin{equation}
  \frac{2(1-\beta)\tphi-\alpha}{n^2} \geq \frac{\alpha (1-2\beta)}{n^2} =: E(\alpha, \beta, n)
 \end{equation}

 So (\ref{eq6}) becomes:
 
 \begin{equation}\label{eq7}
0 \geq t_0 J \left[ -C_\epsilon|\nabla f|^2-B|\nabla f|^3 + A |\nabla f|^4 + EQ^2 \right] -\left(1+O(\epsilon ) \right)Q + O(\epsilon)
 \end{equation}

 Now following the same argument as in \cite{Wang}, calling $y= |\nabla f|^2$, notice that:
 
\begin{align}
 &Ay^2-By^{3/2}-C_\epsilon y = \\
 =&\frac{A}{2}y^2 +  \left( \sqrt{\frac{A}{2}}y-\frac{B}{\sqrt{2A}}y^{1/2} \right)^2 - \frac{B^2}{2A}y - C_\epsilon y \geq\\
 \geq & \left( \sqrt{\frac{A}{2}}y-\frac{1}{\sqrt{2A}} \left(\frac{B^2}{2A} +C_\epsilon \right) \right)^2 - \frac{1}{2A} \left( \frac{B^2}{2A} +C_\epsilon \right)^2 \geq \\
 \geq & - \frac{1}{2A} \left( \frac{B^2}{2A} +C_\epsilon \right)^2 = - \frac{1}{2A} \left( \frac{B^2}{2A} +C \right)^2 + O(\epsilon)=: -\widetilde{D}+O(\epsilon)
\end{align}

Thus (\ref{eq7}) becomes:

\begin{equation}
0 \geq Et_0J Q^2 -(1+O(\epsilon)) Q -\widetilde{D}t_0J +O(\epsilon) 
\end{equation}

Or equivalently, multiplying by $t_0$:

\begin{equation}
 0 \geq EJ G^2 - (1+O(\epsilon)) G - \widetilde{D}t_0^2 J + O(\epsilon)
\end{equation}

Notice that the right hand side is quadratic in $G$, with the leading coefficient being positive. So if it is nonpositive, we must have:

\begin{equation}
 G(p,t_0) \leq \frac{1}{2EJ(t_0)}+\sqrt{\frac{1}{4E^2J^2(t_0)}+\frac{\widetilde{D}t_0^2}{E} +O(\epsilon)} +O(\epsilon)
\end{equation}

Since $(p,t_0)$ is the maximum of $G$ in ${\bf M}\times [0,T]$ and using Lemma \ref{lemma2} and that $\underline{J}$ is decreasing in $t$, we get that for any $x\in {\bf M}$:

\begin{equation}
 G(x,T) \leq \frac{1}{2E\underline{J}(T)}+\sqrt{\frac{1}{4E^2\underline{J}^2(T)}+\frac{\widetilde{D}T^2}{E} +O(\epsilon)} +O(\epsilon)
\end{equation}

So using again Lemma \ref{lemma2}:

\begin{equation}
 T\left[ \tphi \underline{J}(|\nabla f|^2+\epsilon)-\partial_t f \right] \leq G \leq \frac{1}{2E\underline{J}}+\sqrt{\frac{1}{4E^2\underline{J}^2}+\frac{\widetilde{D}T^2}{E} +O(\epsilon)} +O(\epsilon)
\end{equation}

where all the functions in the previous inequality are being evaluated at $(x,T)$. At this point, the inequality does not depend on the point $(p,t_0)$ (which could change when we vary $\epsilon$), and since the inequality holds for any $\epsilon >0$, we get:

\begin{equation}
 T\left[ \tphi \underline{J}|\nabla f|^2-\partial_t f \right] \leq \frac{1}{2E\underline{J}}+\sqrt{\frac{1}{4E^2\underline{J}^2}+\frac{\widetilde{D}T^2}{E} } 
\end{equation}

The argument above works for any value of $T>0$, so at any point $(x,t)\in {\bf M}\times(0,\infty)$ we have:

\begin{equation}
 t\left[ \tphi \underline{J}|\nabla f|^2-\partial_t f \right] \leq \frac{1}{2E\underline{J}}+\sqrt{\frac{1}{4E^2\underline{J}^2}+\frac{\widetilde{D}t^2}{E} } \leq \frac{1}{E\underline{J}}+t\sqrt{\frac{\widetilde{D}}{E}} 
\end{equation}

Hence, using Lemma \ref{lemma1}, $f = \ln (u)$, and dividing by $t$, we get what we wanted:

\begin{equation}
 \alpha \underline{J}\frac{|\nabla u|^2}{u^2} - \frac{\partial_t u}{u} \leq C_1 + \frac{C_2}{\underline{J}}\frac{1}{t} 
\end{equation}

where $C_1 = \sqrt{\frac{\widetilde{D}}{E}}$ and $C_2 = \frac{1}{E}$.

\end{proof}

\section{Proof of Lemma \ref{lemma2}}\label{prooflemma2}

Using the transformation $w = J^{-(c-1)}$, the problem from Lemma \ref{lemma2} becomes:

\begin{equation}
\begin{cases}
\Delta w -\partial_t w + Vw = 0  & \text{ in }\overset{\circ}{{\bf M}}\times(0,\infty)\\
\partial_\nu w = 0 & \text{ on } \partial{\bf M}\times (0,\infty)\\
w = 1 & \text{ on } {\bf M}\times \{0\}
\end{cases} 
\end{equation}

where $V(x) := 2(c-1)|Ric^-|(x) \geq 0$. Note that this is a linear parabolic PDE with Neumann boundary conditions, which has a unique smooth solution given by Duhamel's formula:

\begin{align}
 w(x,t) &= \int_{\bf M} 1\cdot h(t,x,y)dy + \int_0^t \int_{\bf M} h(t-s,x,y) V(y)w(y,s)dyds =\\
 &= 1 + \int_0^t \int_{\bf M} h(t-s,x,y) V(y)w(y,s)dyds
\end{align}

where $h(t,x,y)\geq 0$ is the Neumann heat kernel on $\bf M$.

\vskip 1em
{\bf Claim 1:} $w(x,t) > 0$
\vskip 1em

\begin{proof}
 Argue by contradiction. First, define:
 
 \begin{equation}
  \underline{w}(t) = \min_{x\in {\bf M}} w(x,t)
 \end{equation}
 
 If the claim is false, there exists some $t\geq 0$ such that $\underline{w}(t) \leq 0$. Since $\underline{w}$ is continuous and $\underline{w}(0) = 1$, there exists $t_0>0$ such that $\underline{w}(t_0)=0$ and $\underline{w}(t)>0$ for any $0\leq t<t_0$. Let $x_0\in {\bf M}$ be a point that realizes the minimum $w(x_0,t_0) = \underline{w}(t_0) = 0$. Then:

 \begin{equation}
  \int_0^{t_0} \int_{\bf M} h(t_0-s,x_0,y) V(y)w(y,s)dyds \geq 0
 \end{equation}

 So:
 
\begin{equation}
  w(x_0,t_0) \geq  1
\end{equation}
 
 But that's a contradiction. 
 
\end{proof}

By the previous claim, we know that $J = w^{-\frac{1}{c-1}}$ is well defined and smooth. Moreover:

\vskip 1em
{\bf Claim 2:} $J(x,t)\leq 1$
\vskip 1em

\begin{proof}
 This statement is equivalent to $w(x,t) \geq 1$. By a similar argument as above, since $w>0$:

  \begin{equation}
  \int_0^{t} \int_{\bf M} h(t-s,x,y) V(y)w(y,s)dyds \geq 0
 \end{equation}

 hence:
 
 \begin{equation}
  w(x,t) \geq  1
 \end{equation}
 
\end{proof}

\vskip 1em
{\bf Claim 3:} $J(x,t) \geq \underline{J}(t)$ where $ \underline{J}(t) :=  2^{-\frac{1}{c-1}}e^{-\frac{\widetilde{C_3}}{c-1}t} $ and $\widetilde{C_3} = C_3(c, n,p)\left[ \frac{K}{D^{2-\frac{n}{p}}R^{\frac{n}{p}}} + \frac{K^{\frac{2p}{2p-n}}}{D^{\frac{4p-6n}{2p-n}}R^{\frac{4n}{2p-n}}}\right]$.
\vskip 1em

\begin{proof}
 To prove this we will follow a similar argument as in \cite{ZhangZhu2}: we will find Gaussian upper bounds for $w(x,t)$. To do so, we use the following result from Choulli, Kayser, and Ouhabaz in \cite{CKO} (Theorem 1.1): 
 
 \begin{lemma}[Choulli, Kayser, Ouhabaz \cite{CKO}]\label{lemma3}
  Suppose that ${\bf N}^n$ is a smooth Riemannian manifold that satisfies the volume doubling property and whose heat kernel $p(t,x,y)$ satisfies:
  
  \begin{equation}\label{heatk}
   p(t,x,y)\leq \frac{C}{[V(x,\sqrt{t})V(y,\sqrt{t})]^{1/2}}e^{-c\frac{d(x,y)^2}{t}}
  \end{equation}

  where $C,c>0$ are constants, $d(x,y)$ denotes the geodesic distance between $x,y\in {\bf N}^n$, and $V(x,r) = Vol (B(x,r))$ is the volume of a geodesic ball of radius $r>0$. Suppose also that ${\bf M}^n\subseteq {\bf N}^n$ is a Riemannian submanifold with boundary such that $diam ({\bf M}) < \infty$, satisfying the volume doubling property: 
  
  \begin{equation}\label{VDM}
   V_{\bf M}(x,s) \leq \widetilde{C} \left(\frac{s}{r}\right)^\gamma V_{\bf M}(x,r)
  \end{equation}

  where $V_{\bf M} (x,r)= Vol (B(x,r)\cap {\bf M})$ is the volume in ${\bf M}^n$ of a geodesic ball $B(x,r)$ of ${\bf N}^n$, and $\widetilde{C},\gamma>0$ are positive constants. Then, the Neumann heat kernel on ${\bf M}^n$ satisfies:
  
  \begin{equation}
   h(t,x,y) \leq \frac{C}{[V_{\bf M}(x,\sqrt{t})V_{\bf M}(y,\sqrt{t})]^{1/2}} \left( 1 + \frac{d(x,y)^2}{4t} \right)^\gamma e^{-\frac{d(x,y)^2}{4t}}
  \end{equation}
 \end{lemma}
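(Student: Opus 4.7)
The statement is quoted as Theorem 1.1 of Choulli--Kayser--Ouhabaz, so the natural approach is to recall the two-step scheme of their proof: derive a Gaussian upper bound on the Neumann heat kernel $h(t,x,y)$ on $\bf M$ from volume doubling on $\bf M$ (hypothesis \eqref{VDM}) together with the ambient Gaussian bound \eqref{heatk}.

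First I would establish the on-diagonal estimate $h(t,x,x) \leq C / V_{\bf M}(x,\sqrt{t})$ for $t \in (0,\operatorname{diam}({\bf M})^2]$. This is where the ambient Gaussian bound \eqref{heatk} enters: one compares the semigroup identity $h(t,x,x) = \int_{\bf M} h(t/2,x,z)^2 \, dz$ with its counterpart for the ambient kernel, and then uses \eqref{VDM} to translate the ambient ball volumes appearing in the estimate into the submanifold volumes $V_{\bf M}(\cdot,\sqrt{t})$. Equivalently, \eqref{VDM} implies a Nash-type inequality via a Faber--Krahn capacity argument (which does not need any Poincar\'e inequality), and a standard Nash--Moser iteration for the Neumann semigroup yields the same on-diagonal bound.

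Next I would produce the off-diagonal Gaussian factor via Davies' integrated maximum principle: for any $1$-Lipschitz $\psi$ and any real $\xi$, differentiating $\int_{\bf M} e^{2\xi\psi} u^2$ along the Neumann semigroup costs no boundary term, so the weighted estimate $\|e^{\xi\psi} e^{t\Delta}(e^{-\xi\psi} f)\|_2 \leq e^{\xi^2 t}\|f\|_2$ goes through exactly as in the boundaryless case. Combining this with the on-diagonal bound of Step~1 and optimizing $\xi$ for $\psi(\cdot) = d(\cdot,y)$ gives an off-diagonal estimate with Gaussian exponent slightly worse than $1/(4t)$. The sharp exponent $1/(4t)$ stated in the lemma is then recovered by a standard Grigor'yan-type self-improvement argument, at the price of the polynomial factor $(1 + d(x,y)^2/(4t))^\gamma$; this same factor also absorbs the conversion between the natural scale $\sqrt{t+d^2}$ in Davies' argument and the scale $\sqrt{t}$ appearing in $V_{\bf M}(\cdot,\sqrt{t})$ via another application of \eqref{VDM}.

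The main obstacle is Step~1, because the Neumann heat kernel on $\bf M$ is \emph{not} pointwise dominated by the ambient heat kernel on $\bf N$ --- boundary reflection tends to \emph{increase} diagonal values --- so the on-diagonal bound cannot be inferred from a naive pointwise comparison and must come from volume doubling up to the boundary. This is precisely why the hypothesis \eqref{VDM} is imposed, and in the application to the main theorem it is Lemma~\ref{VDonM}, which relies on the interior rolling $R$-ball condition, that supplies \eqref{VDM} on $\bf M$.
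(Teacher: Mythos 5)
The paper does not prove this lemma: it is quoted directly as Theorem 1.1 of \cite{CKO} and used as a black box (the sentence immediately after it simply checks that its hypotheses hold in the present setting), so there is no internal proof to compare your sketch against. As a reconstruction of the CKO argument, your outline is plausible --- an on-diagonal bound from a Nash or Faber--Krahn inequality derived from (\ref{VDM}), Davies' integrated maximum principle for the off-diagonal Gaussian factor, and a self-improvement step that trades the polynomial factor $\left(1+\frac{d^2}{4t}\right)^\gamma$ for the sharp exponent --- and you correctly isolate the real point, namely that the Neumann heat kernel on ${\bf M}$ admits no pointwise domination by the ambient kernel, so doubling of $V_{\bf M}$ up to $\partial{\bf M}$ (supplied later by Lemma \ref{VDonM} via the rolling $R$-ball condition) is the essential hypothesis. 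Two caveats. First, your primary route to the on-diagonal estimate, ``compare $h(t,x,x)=\int_{\bf M} h(t/2,x,z)^2\,dz$ with its counterpart for $p$,'' is in tension with your own closing remark: without a pointwise comparison between $h$ and $p$, that semigroup identity transfers no information from $p$ to $h$; the alternative Nash/Faber--Krahn route you mention is the one that actually does the work. Second, the proof in \cite{CKO} is, to my recollection, shorter than what you describe: it appeals to a known characterization of Gaussian upper bounds on doubling metric-measure spaces (diagonal upper estimate plus doubling implies the full Gaussian bound) and verifies the diagonal estimate for the Neumann form on ${\bf M}$, rather than re-running the Davies--Grigor'yan machinery. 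Since the paper itself gives no proof, that last point is a remark about the reference, not about the paper.
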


 \vskip 2em
 All the hypothesis of this lemma are known to be satisfied, except maybe for property (\ref{VDM}) near the boundary. More precisely, $diam({\bf M})<\infty$ since $\bf M$ is compact, and the volume doubling property for $\bf N$ was proved in \cite{PetersenWei2} and can be stated as:
 
 \begin{lemma}[Petersen, Wei \cite{PetersenWei2}]
Given $p>n/2$ and $D >0$, there exists $K= K (n,p)>0$ such that if

\begin{equation}\label{curvcon}
D^2 \sup_{x\in {\bf N}} \left( \oint_{B(x,D)} |Ric^-|^p \right)^{1/p}< K
\end{equation}

then for all $x\in {\bf N}$ and $r<s< D$ we have:

\begin{equation}
 V(x,s) \leq 2\left( \frac{s}{r} \right)^n V(x,r)
\end{equation}

\end{lemma}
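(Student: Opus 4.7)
The strategy is to generalize the Bishop--Gromov volume comparison theorem to the integral Ricci setting, following the original argument of Petersen--Wei. The plan is to quantify the failure of the pointwise lower Ricci bound by $|Ric^-|$, integrate the resulting defect along radial geodesics, and then apply H\"older's inequality to convert the pointwise defect into an $L^p$ estimate.

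First I would work in geodesic polar coordinates centered at $x \in {\bf N}$, writing $dV = J(r,\theta)\, dr\, d\theta$. Setting $\psi := \sqrt{|Ric^-|}$, the Riccati equation along a unit-speed radial geodesic yields a distributional inequality of the form
\begin{equation}
\left( J^{1/(n-1)} \right)'' + \frac{\psi^2}{n-1} J^{1/(n-1)} \geq 0
\end{equation}
on the complement of the cut locus. Integrating twice in $r$ and once over directions $\theta$, and comparing against the Euclidean Jacobian $r^{n-1}$, this leads to an inequality controlling the relative volume ratio $V(x,s)/s^n$ against $V(x,r)/r^n$ in terms of a radial integral of $\psi^2 = |Ric^-|$ over $B(x,s)$.

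Next I would apply H\"older's inequality with exponent $p > n/2$:
\begin{equation}
\int_{B(x,D)} |Ric^-|\, dy \leq V(x,D)^{1-1/p}\left(\int_{B(x,D)} |Ric^-|^p\, dy\right)^{1/p}.
\end{equation}
The scaling-invariant hypothesis $D^2 \sup_x (\oint_{B(x,D)} |Ric^-|^p)^{1/p} < K$ then gives
\begin{equation}
\int_{B(x,D)} |Ric^-|\, dy \leq \frac{K\, V(x,D)}{D^2},
\end{equation}
and substituting back into the volume comparison produces an estimate of the form $V(x,s)/s^n \leq V(x,r)/r^n \cdot F(n,p,K)$ with $F(n,p,K) \to 1$ as $K \to 0$. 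Choosing $K = K(n,p)$ small enough that $F(n,p,K) \leq 2$ yields the asserted bound $V(x,s) \leq 2\, (s/r)^n\, V(x,r)$. An iteration at intermediate radii $r \leq r' \leq s$, exploiting the scaling invariance of the hypothesis, closes the inequality uniformly on $[0,D]$.

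The main technical obstacle is the treatment of the cut locus, where the Jacobian $J$ is only controlled in a distributional sense and the pointwise Riccati inequality fails. The standard resolution, adopted in \cite{PetersenWei1,PetersenWei2}, is to interpret $\Delta r$ as a distribution on all of ${\bf N}$ whose smooth part obeys the Riccati comparison and whose singular part along the cut locus carries the correct sign, so that the integrated inequality is preserved. A secondary subtlety is the bootstrap required to pass from the integral bound on $|Ric^-|$ over $B(x,D)$ to bounds over sub-balls $B(x,r)$: this is handled via a covering argument using the (as yet unproven) doubling property itself, which must therefore be established by an induction on scale or by a continuity argument starting from small $K$.
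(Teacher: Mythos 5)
This lemma is not proved in the paper: it is cited as a known result from Petersen--Wei \cite{PetersenWei2}, and the paper simply invokes it (alongside Lemma 3.4, which transports the doubling property up to $\partial{\bf M}$) in order to apply the Choulli--Kayser--Ouhabaz Gaussian heat kernel bound. So there is no ``paper's proof'' to compare against; what you have written is a blind reconstruction of the original argument.

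As a reconstruction, the overall architecture (polar-coordinate Jacobian, Riccati/Bishop comparison, H\"older, smallness of $K$) is the right one, but there is a sign error in the central differential inequality. For $y = J^{1/(n-1)}$ the Riccati comparison gives
\begin{equation}
y'' + \frac{\mathrm{Ric}(\partial_r,\partial_r)}{n-1}\, y \leq 0,
\end{equation}
and inserting $\mathrm{Ric}(\partial_r,\partial_r) \geq -|Ric^-| = -\psi^2$ yields the \emph{upper} bound
\begin{equation}
y'' - \frac{\psi^2}{n-1}\, y \leq 0.
\end{equation}
You wrote $y'' + \frac{\psi^2}{n-1}y \geq 0$, which is a lower bound on $y''$ and hence points in the wrong direction for a Bishop--Gromov type volume \emph{upper} estimate. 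Beyond this, Petersen--Wei do not work with $J^{1/(n-1)}$ directly; they work with the mean curvature $\lambda = J'/J$, set $\psi := (\lambda - \lambda_0)_+$ (the excess over the model mean curvature), derive a closed ODE for $\psi$, and control $\|\psi\|_{L^{2p}}$ on spheres in terms of the $L^p$ norm of the curvature defect. This closed ODE is what resolves the circularity you correctly flag at the end: you cannot in general bound the integral of $|Ric^-|$ over sub-balls via a covering argument that presupposes the very doubling constant being proved. Your ``induction on scale'' could in principle be made to work, but it requires precisely this kind of quantitative ODE input to close, and as written it remains a gap rather than a sketch of the actual mechanism.
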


Choosing $D$ so that $diam({\bf M})\leq D$, we get the volume doubling property for all the balls completely contained in $\bf M$. Also, (\ref{heatk}) follows from the volume doubling property and a Sobolev inequality proved in \cite{DWZ}, as explained in \cite{ZhangZhu2}. Now we will show that property (\ref{VDM}) holds as well:
 
 \begin{lemma}\label{VDonM}
    Given $D,\gamma >0$, if a manifold $\bf N$ satisfies the volume doubling property
    \begin{equation}
     V(x,s) \leq C\left(\frac{s}{r}\right)^\gamma V(x,r)
    \end{equation}
    for $r\leq s\leq D$ and $x\in {\bf N}$, then a compact submanifold with boundary ${\bf M}\subseteq {\bf N}$, with $diam({\bf M}) \leq D$, whose boundary satisfies the interior rolling $R-$ball condition, satisfies:
    \begin{equation}
     V_{\bf M}(x,s) \leq \widetilde{C} \left(\frac{s}{r}\right)^\gamma V_{\bf M}(x,r)
    \end{equation}
    for $0<r\leq s $, $x\in {\bf M}$, $\widetilde{C} = \max \{ 3^\gamma C,  \left( \frac{2D}{R} \right)^\gamma C\}$.
    \end{lemma}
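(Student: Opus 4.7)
The strategy is a case analysis based on the relative size of $r$, $s$ compared to $R$ and $D$, using the interior rolling $R$-ball condition to compensate for the lack of an ambient interior ball around points of $\mathbf{M}$ close to $\partial\mathbf{M}$.

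First, I would reduce to the case $s\leq D$. Indeed, if $s>D$, then $\operatorname{diam}(\mathbf{M})\leq D$ forces $B(x,s)\supseteq\mathbf{M}$, so $V_{\mathbf{M}}(x,s)=V_{\mathbf{M}}(x,D)$; since $(s/r)^\gamma\geq (D/r)^\gamma$, the inequality at $s$ then follows from the inequality at $s=D$. So from now on $s\leq D$.

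Next, I would handle the ``small scale'' regime $r\leq R/2$, splitting into two subcases. If $d(x,\partial\mathbf{M})\geq r/3$, then $B(x,r/3)\subseteq\mathbf{M}$ is contained entirely in $\mathbf{M}$, so $V_{\mathbf{M}}(x,r)\geq V_{\mathbf{M}}(x,r/3)=V(x,r/3)$, and the $\mathbf{N}$-doubling between the scales $s$ and $r/3$ (both $\leq D$) gives
\[
V_{\mathbf{M}}(x,s)\leq V(x,s)\leq C\Bigl(\tfrac{3s}{r}\Bigr)^\gamma V(x,r/3)\leq 3^\gamma C\Bigl(\tfrac{s}{r}\Bigr)^\gamma V_{\mathbf{M}}(x,r).
\]
If instead $d(x,\partial\mathbf{M})<r/3$, let $p\in\partial\mathbf{M}$ be the nearest boundary point and $q$ the rolling ball center at $p$, so that $B(q,R)\subseteq\mathbf{M}$ and $d(q,p)=R$. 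Choose $y$ on the minimizing geodesic from $p$ to $q$ with $d(y,p)=r/3$ (valid since $r/3\leq R$). A quick triangle inequality check gives $B(y,r/3)\subseteq B(q,R)\subseteq\mathbf{M}$ and $d(x,y)<2r/3$, hence $B(y,r/3)\subseteq B(x,r)$ and $V_{\mathbf{M}}(x,r)\geq V(y,r/3)$. Applying the $\mathbf{N}$-doubling at $y$ between the scales $s+d(x,y)\leq s+2r/3$ and $r/3$, together with $s\geq r$ to absorb the additive term, yields the desired bound with a constant of the form $3^\gamma C$.

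For the ``large scale'' regime $r>R/2$, I would use the trivial bound $V_{\mathbf{M}}(x,s)\leq \operatorname{Vol}(\mathbf{M})\leq V(x,D)$ together with the $\mathbf{N}$-doubling between the scales $D$ and $R/2$:
\[
V(x,D)\leq C\Bigl(\tfrac{2D}{R}\Bigr)^\gamma V(x,R/2).
\]
Since $r>R/2$ implies $V_{\mathbf{M}}(x,r)\geq V_{\mathbf{M}}(x,R/2)$, and the rolling-ball construction above applied at scale $R/2$ gives $V(x,R/2)\lesssim V_{\mathbf{M}}(x,R/2)$, the two ingredients combine to give the inequality with constant $(2D/R)^\gamma C$.

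The main obstacle is the careful bookkeeping to match exactly the constants $3^\gamma C$ and $(2D/R)^\gamma C$ in $\widetilde{C}$: the ratio $(s+2r/3)/(r/3)=3s/r+2$ produced by the rolling-ball subcase is slightly larger than $3s/r$, so one must either refine the choice of $y$ or absorb the extra numerical factor into the maximum defining $\widetilde{C}$. The case analysis itself is otherwise routine, relying only on triangle inequality arguments and the $\mathbf{N}$-doubling from Petersen--Wei.
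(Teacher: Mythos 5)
Your overall strategy matches the paper's: exploit the collinearity of $x$, the nearest boundary point $p$, and the rolling-ball center $q$, then translate the center of the small ball inward along this common geodesic to find a ball that is entirely inside ${\bf M}$ and nested inside $B(x,r)$. The proposal is correct in structure, but the placement of the auxiliary center is what costs you the stated constants, and you correctly flag this yourself.

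Concretely, in the close-to-boundary subcase you place $y$ at distance $r/3$ from $p$; the paper instead places its point $q'$ at distance $r/2$ from $x$ (moving toward $q$). That choice simultaneously gives $B(q',r/2)\subseteq B(x,r)\cap{\bf M}$ and, since $r\leq s$, $B(x,s)\subseteq B(q',3s/2)$; one application of ambient doubling between radii $3s/2$ and $r/2$ then yields the ratio $3s/r$ exactly, hence $3^\gamma C$ — no extra additive term $3s/r+2$ to absorb, and no need to split on $d(x,\partial{\bf M})$ at all. Similarly, in the large-$r$ regime the paper translates to $q''$ at distance $R/2$ from $x$, getting directly $V_{\bf M}(x,s)\leq V(q'',D)\leq C(2D/R)^\gamma V(q'',R/2)\leq (2D/R)^\gamma C\,V_{\bf M}(x,r)$. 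Your version relies on an unproved step $V(x,R/2)\lesssim V_{\bf M}(x,R/2)$; chasing it through the rolling-ball argument inserts another multiplicative factor of $C$ and a power of a numerical constant, so the stated $\widetilde{C}=\max\{3^\gamma C,(2D/R)^\gamma C\}$ is not actually achieved by your route. (Your upfront reduction to $s\leq D$ is fine; the paper handles the same issue inside Case 1 by noting that $3s/2\geq D$ forces $2D<3s$.) In short: the idea is right and the argument closes with larger constants, but to recover the precise $\widetilde{C}$ claimed, move the auxiliary center by $r/2$ (resp.\ $R/2$) \emph{measured from $x$}, not by $r/3$ measured from $p$.
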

 \begin{proof}
  We only need to consider the situation where $B(x,s)\cap \partial {\bf M} \not = \emptyset$. If that's the case, then let $p\in \partial {\bf M}$ denote the closest point to $x$ in $\partial {\bf M}$. Using the interior rolling $R-$ball condition, we know that there exists $q\in {\bf M}$ such that $B(q,R)\subseteq {\bf M}$ and $B(q,R)\cap \partial {\bf M} = \{ p\}$.
 \vskip 1em
  Notice that, by definition, $p$ minimizes the distance to the boundary from $x$ and from $q$, hence the geodesics joining $p$ and $x$, $\gamma_{px}$, and $p$ and $q$, $\gamma_{pq}$, must both be perpendicular to $\partial {\bf M}$ at $p$. Hence, since geodesics do not branch, $x$, $q$ and $p$ are on the same geodesic.  
\vskip 1em
  \emph{Case 1:} $0<r \leq R$
\vskip 1em
  Consider a point $q'$ on the geodesic joining $x$ and $q$, such that $d(q',x) = r/2$ and $d(q',q)=d(x,q)-r/2$. Notice that $p$ also minimizes the distance from $q'$ to $\partial {\bf M}$. It's easy to see that $B(q',r/2) \subseteq B(x,r)\cap{\bf M}$, and also $B(x,s)\subseteq B(q',3s/2)$. 
  \vskip 1em
  If $\frac{3s}{2} \leq D$, using volume doubling in $\bf N$:
  
  \begin{equation}
   V_{\bf M}(x,s) \leq V(q',3s/2) \leq 3^\gamma C \left( \frac{s}{r} \right)^\gamma V(q',r/2) \leq  \widetilde{C} \left( \frac{s}{r} \right)^\gamma V_{\bf M}(x,r)
  \end{equation}

 If $\frac{3s}{2}\geq D$ we get:
  \begin{equation}
V_{\bf M}(x,s)  \leq V(q',D) \leq C \left(\frac{2D}{r}\right)^\gamma V(q',r/2) \leq 3^\gamma C\left( \frac{s}{r} \right)^\gamma V(q',r/2)\leq \widetilde{C} \left( \frac{s}{r} \right)^\gamma V_{\bf M}(x,r)    
  \end{equation}

\vskip 1em
  \emph{Case 2:} $R \leq r$
\vskip 1em
  
  In this case, consider also a point $q''$ on the geodesic joining $x$ and $q$, such that $d(q'',x) = R/2$ and $d(q'',q) = d(x,q)-R/2$. As before, it's easy to see that $B(q'',R/2)\subseteq B(x,r)\cap {\bf M}$. Thus, we can see that:
  
    \begin{equation}
      V_{\bf M}(x,s)\leq V(q'',D) \leq C \left( \frac{2D}{R}\right)^\gamma V(q'',R/2) \leq \widetilde{C} V_{\bf M} (x,r)
    \end{equation}
    
    which is stronger than what we want to prove, so in particular:
    
    \begin{equation}
      V_{\bf M}(x,s)\leq \widetilde{C} \left( \frac{s}{r}\right)^\gamma V_{\bf M} (x,r)
    \end{equation}

 \end{proof}
 
 In our case, in particular, we can get:

     \begin{equation}
      V_{\bf M}(x,s)\leq \left(2^{n+1}\cdot3^n \frac{D^n}{R^n}\right) \left( \frac{s}{r}\right)^n V_{\bf M} (x,r)
    \end{equation}
 
 Notice that, as a consequence of the volume doubling in ${\bf N}$ and the curvature condition (\ref{curvcon}), we can derive:

\begin{equation}
 \left( \oint_{\bf M} |Ric^-|^p \right)^{\frac{1}{p}} < 2^{\frac{1}{p}}  \frac{1}{D^{\frac{2p-n}{p}}R^{\frac{n}{p}}} K
\end{equation}
 
 Now, using Lemma \ref{lemma3}, and following the same argument as in \cite{ZhangZhu2}, we can finish the proof of the claim. Let 
 
 \begin{equation}
 \overline{w}(t) = \sup_{(x,s)\in {\bf M}\times[0,t]}w(x,s) 
 \end{equation}
 Then:
 \begin{equation}\label{eq8}
  w(x,t) \leq 1+ \int_0^t \overline{w}(s) \int_{\bf M} h(t-s,x,y)V(y) dyds
 \end{equation}

\vskip 1em
  \emph{Case 1:} $t-s \geq D^2$
\vskip 1em
  
In this case $V_{\bf M} (z,\sqrt{t-s}) = |M|$, so:

\begin{equation}
\begin{split}
 \int_{\bf M} h(t-s,x,y)V(y) dy \leq \int_{\bf M} \frac{Ce^{-\frac{d(x,y)^2}{4(t-s)}}}{[V_{\bf M}(x,\sqrt{t-s}) V_{\bf M}(y,\sqrt{t-s})]^{1/2}} \left(1+\frac{d(x,y)^2}{4(t-s)} \right)^n V(y) dy \leq \\
 \leq \frac{C(c, n)}{|{\bf M}|} \int_{\bf M} |Ric^-|(y) dy \leq C(c, n) \left( \oint_{\bf M} |Ric^-|^p \right)^{\frac{1}{p}} \leq C_4\frac{K}{D^{2-\frac{n}{p}}R^{\frac{n}{p}}} =: \widetilde{C_4}
\end{split}
\end{equation}
where $C_4= C_4(c , n, p)$.
 
\vskip 1em
  \emph{Case 2:} $t-s \leq D^2$
\vskip 1em 
 
 Using Lemma \ref{VDonM}:
 
 \begin{equation}
  V_{\bf M}(z,\sqrt{t-s}) \geq  \frac{R^n}{2^{n+1}\cdot 3^n D^{2n}}(t-s)^{n/2} V_{\bf M}(z,D) = C(n)\frac{R^n}{D^{2n}}(t-s)^{n/2} |{\bf M}|
 \end{equation}

Thus, by H\"older's inequality:

\begin{equation}
 \begin{split}
 \int_{\bf M} h(t-s,x,y)V(y) dy \leq || V||_{L^p({\bf M})} \left( \int_{\bf M} hh^{\frac{1}{p-1}} \right)^{\frac{p-1}{p}} \leq \\ \leq || V||_{L^p({\bf M})} \frac{C(n,p)D^{\frac{2n}{p}}}{|{\bf M}|^{\frac{1}{p}} R^{\frac{n}{p}}(t-s)^{\frac{n}{2p}}} \left( \int_{\bf M} h dy\right)^{\frac{p-1}{p}} \leq \frac{C_5K}{D^{2-\frac{3n}{p}}R^{\frac{2n}{p}}} (t-s)^{-\frac{n}{2p}}=: \widetilde{C_5}(t-s)^{-\frac{n}{2p}}
 \end{split}
\end{equation}
where $C_5 = C_5(c , n, p) $, and where we have used that:

\begin{equation}
    h(t,x,y) \leq  \frac{C}{[V_{\bf M}(x,\sqrt{t})V_{\bf M}(y,\sqrt{t})]^{1/2}}
\end{equation}

 With these estimates, (\ref{eq8}) becomes:

 \begin{equation}\label{eq9}
  w(x,t)  \leq 1 + \widetilde{C_4}\int_0^{t-D^2} \overline{w}(s)ds + \widetilde{C_5}\int_{t-D^2}^t (t-s)^{-\frac{n}{2p}}\overline{w}(s)ds
 \end{equation}

 The second term on the right can be written as:
 
 \begin{equation}
 \begin{split}
  \int_{t-D^2}^t (t-s)^{-\frac{n}{2p}}\overline{w}(s)ds = \int_{t-D^2}^{t-\epsilon} (t-s)^{-\frac{n}{2p}}\overline{w}(s)ds+\int_{t-\epsilon}^t (t-s)^{-\frac{n}{2p}}\overline{w}(s)ds\leq \\ 
  \leq \epsilon^{-\frac{n}{2p}}\int_{t-D^2}^{t-\epsilon} \overline{w}(s)ds + \overline{w}(t) \int_{t-\epsilon}^t (t-s)^{-\frac{n}{2p}} ds \leq \epsilon^{-\frac{n}{2p}}\int_{t-D^2}^{t} \overline{w}(s)ds + \frac{2p \epsilon^{\frac{2p-n}{2p}}}{2p-n}\overline{w}(t)  
 \end{split}
 \end{equation}
where we have used that  $p> \frac{n}{2}$. Then, taking supremum over $(x,\tilde{t})\in {\bf M}\times [0,t]$, (\ref{eq9}) becomes:
\begin{equation}
\left[ 1 - \frac{2p\widetilde{C_5} \epsilon^{\frac{2p-n}{2p}}}{2p-n} \right] \overline{w}(t) \leq 1+ \max \{\widetilde{C_4}, \widetilde{C_5}\epsilon^{-\frac{n}{2p}}\} \int_0^t \overline{w}(s) ds
\end{equation}
Now, choosing $\epsilon = \left(\frac{4p\widetilde{C_5}}{2p-n}\right)^{-\frac{2p}{2p-n}}$, we get:
\begin{equation}
 \overline{w}(t) \leq 2+ \widetilde{C_3}\int_0^t \overline{w}(s) ds
\end{equation}
where $\widetilde{C_3}$ is chosen so that $\widetilde{C_3}\geq 2\max \{\widetilde{C_4}, \widetilde{C_5}\epsilon^{\frac{-n}{2p}}\}$, for example:

\begin{equation}
 \widetilde{C_3} = C_3\left[ \frac{K}{D^{2-\frac{n}{p}}R^{\frac{n}{p}}} + \frac{K^{\frac{2p}{2p-n}}}{D^{\frac{4p-6n}{2p-n}}R^{\frac{4n}{2p-n}}}\right]
\end{equation}
for some constant $C_3= C_3(c, n,p)$. So by Gr\"onwall's inequality:
\begin{equation}
 w(x,t)\leq \overline{w}(t) \leq 2e^{\widetilde{C_3} t} 
\end{equation}
Thus, using $J = w^{-\frac{1}{c-1}}$, we get 

\begin{equation}
 \underline{J}(t) :=  2^{-\frac{1}{c-1}}e^{-\frac{\widetilde{C_3}}{c-1}t}  \leq J(x,t) 
\end{equation}

\end{proof}
\vskip 5em

\section*{Acknowledgements}

The author would like to thank Professors Qi S. Zhang, Meng Zhu, Lihan Wang and Guofang Wei for their valuable advise and comments.

\newpage
\bibliographystyle{alphanum}

\Addresses
\end{document}